\newtheorem{theorem}{Theorem}[section]
\newtheorem{lemma}[theorem]{Lemma}
\theoremstyle{definition}
\theoremstyle{remark}
\newtheorem{question}[theorem]{Question}
\numberwithin{equation}{section}
\begin{document}

\title{On the set of supercyclic operators}

\author{Thiago R. Alves}
\address{Departamento de Matem\'{a}tica,
	Instituto de Ci\^{e}ncias Exatas,
	Universidade Federal do Amazonas,
	69.077-000 -- Manaus -- Brazil}
\email{alves@ufam.edu.br}
\thanks{The first author was financed in part by the Coordena\c{c}\~{a}o de Aperfei\c{c}oamento de Pessoal de N\'{\i}vel Superior – Brasil (CAPES) – Finance Code 001 and FAPEAM}

\author{Gustavo C. Souza}
\address{Departamento de Matem\'{a}tica,
	Instituto de Ci\^{e}ncias Exatas,
	Universidade Federal do Amazonas,
	69.077-000 -- Manaus -- Brazil}
\email{gustavo.souza@super.ufam.edu.br }
\thanks{}

\subjclass[2020]{Primary 47A16, 47L05, 46B87; Secondary 47B01}

\date{}

\dedicatory{}

\keywords{Supercyclic operator. Spaceable. Biorthogonal system. Supercyclicity Criterion.}

\begin{abstract}
In this article, we address a problem posed by F. Bayart regarding the existence of an infinite-dimensional closed vector subspace (excluding the null operator) within the set of supercyclic operators on Banach spaces. We resolve this problem by establishing the existence of the closed subspace. Furthermore, we prove that the set of supercyclic operators on $\ell_1$ contains, up to the null operator, an isometric copy of $\ell_1$.
\end{abstract}

\maketitle

\section{Introduction and main result}

A {\it supercyclic operator} $T$ from a Banach space $X$ to itself is defined as a linear continuous  operator for which there exists a vector $x$ in $X$ such that the set 
\begin{align} \label{set-dens-super}
\{\lambda \, T^k(x) : \lambda \in \mathbb K, \, k \geq 0\}
\end{align}
is dense in $X$, where $\mathbb K$ is either the real field $\mathbb R$ or the complex field $\mathbb C$, depending on whether $X$ is a real or complex vector space, respectively. The term ``supercyclic'' was first used by H. M. Hilden and L. J. Wallen in \cite[Sect. 5]{HilWal74}. Supercyclic operators have attracted significant attention in the field of linear dynamics, and extensive research has been dedicated to them, as evidenced by the existing literature and references (cf. \cite[Chs. 1 and 9]{BayMath-book}, its citing articles, and its cited references).

This article focuses on a question originally raised by F. Bayart \cite[p. 302, Remark (2)]{Bayart}; also referred to as Problem 47 in \cite{GuiMonZiz}. The question pertains to the existence of an infinite-dimensional closed vector subspace, excluding the null operator, within the set of supercyclic operators. While F. Bayart has confirmed the affirmative answer in the context of operators defined on infinite-dimensional complex separable Hilbert spaces, the question has remained unresolved in the broader framework of separable Banach spaces, including real separable Hilbert spaces.

In this article, we tackle the question posed by F. Bayart in the context of operators defined on infinite-dimensional separable Banach spaces, covering both real and complex cases. Specifically, we offer a solution by establishing the existence of an infinite-dimensional closed vector subspace, excluding the null operator, within the set of supercyclic operators. Furthermore, we prove that the set of supercyclic operators on $\ell_1$ contains (up to the null operator) an isometric copy of $\ell_1$.

Similar investigations, aimed at uncovering linear structures within nonlinear sets, have been conducted across various domains of Analysis in the past two decades, as evidenced by the comprehensive survey article \cite{Ber-GonPelSeo-Sep} and its citations. We adopt the established terminology introduced in \cite{AronGurSeo}, whereby a subset of a topological vector space is considered {\it``spaceable''} if it contains (up to the null vector) an infinite-dimensional closed vector subspace.

Hence, considering a separable Banach space $X$ and denoting $\mathcal{L}(X)$ as the set of all continuous linear operators on $X$, our main result can be stated as follows:

\begin{theorem} \label{thm1}
	The set of supercyclic operators in $\mathcal{L}(X)$ is spaceable. Furthermore, the set of supercyclic operators in $\mathcal{L}(\ell_1)$ contains (up to the null operator) an isometric copy of $\ell_1$.
\end{theorem}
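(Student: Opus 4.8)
The plan is to reduce everything to the Supercyclicity Criterion and to a single robust family of ``generalized backward shifts'' attached to a biorthogonal system. Recall the sufficient criterion: $T$ is supercyclic provided there are dense sets $D_1,D_2\subseteq X$, an increasing sequence $(n_k)$, and maps $S_{n_k}\colon D_2\to X$ with $\|T^{n_k}x\|\,\|S_{n_k}y\|\to 0$ and $T^{n_k}S_{n_k}y\to y$ for all $x\in D_1$, $y\in D_2$. First I would record the elementary but decisive observation that for a generalized backward shift the first condition is automatic: if $\{(e_n,e_n^*)\}$ is a fundamental total biorthogonal system, $D_1=D_2$ is the (dense) linear span of the $e_n$, and $T$ satisfies $Te_n=w_ne_{n-1}$ with $w_n\neq 0$, then $T^{n}x=0$ for every finitely supported $x$ once $n$ exceeds its support, so $\|T^{n}x\|\,\|S_{n}y\|=0$ eventually no matter how fast $\|S_ny\|$ grows. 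Taking $S_n$ to be the corresponding weighted forward shift gives $T^nS_ny=y$ exactly. Hence \emph{every} such shift with nonzero weights is supercyclic; choosing $\sum_n\abs{w_n}\cdot\sup_m\|e_m^*\|<\infty$ makes $T$ bounded on an arbitrary separable $X$ equipped with a bounded Markushevich basis.

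The spaceability statement then amounts to producing an infinite-dimensional closed subspace $M\subseteq\mathcal L(X)$ all of whose nonzero elements are supercyclic. I would reindex the biorthogonal system over $\N\times\N$, writing $V_i=\overline{\mathrm{span}}\{e_{i,j}:j\ge1\}$, and build operators $T_1,T_2,\dots$ of generalized-shift type with \emph{overlapping}, self-similar supports, so that for $a=(a_i)\in\ell_1$ the operator $T_a:=\sum_i a_iT_i$ is again of generalized-shift type. The purpose of the overlap and self-similarity is that if $m=\min\{i:a_i\neq0\}$, then on a dense $T_a$-invariant subspace $T_a$ is conjugate to a backward shift whose weights are controlled by $a_m$ alone, hence nonzero; the previous paragraph then yields supercyclicity of $T_a$ for \emph{every} nonzero $a$. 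Linearity and injectivity of $a\mapsto T_a$, together with a lower estimate $\|T_a\|\ge c\,\|a\|$ for some $c>0$, show that $M=\{T_a:a\in\ell_1\}$ is an infinite-dimensional closed subspace, giving spaceability.

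For the second assertion I would tune the same construction on $\ell_1$ so that one basis vector $z$ is spread across the blocks: arrange $T_iz=u_i$ with $\{u_i\}$ pairwise disjointly supported unit vectors and $T_i$ supported away from $z$ otherwise. Then $\|T_az\|_1=\|\sum_i a_iu_i\|_1=\sum_i\abs{a_i}=\|a\|_1$ by disjointness, while designing every remaining basis vector $w$ to satisfy $\|T_aw\|_1\le\|a\|_1$ forces $\|T_a\|=\sup_w\|T_aw\|_1=\|a\|_1$, since on $\ell_1$ the operator norm is the supremum of the norms of the images of the basis vectors. Thus $a\mapsto T_a$ is a linear isometry of $\ell_1$ into $\mathcal L(\ell_1)$ whose nonzero image consists of supercyclic operators.

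The main obstacle is the tension between linearity and supercyclicity: because $a\mapsto T_a$ is linear, the weights of $T_a$ are linear functionals of $a$, and no countable family of such functionals can be simultaneously nonvanishing off the origin, so $T_a$ can never be globally conjugate to a single nonzero-weight shift for all $a\neq0$. The construction must therefore be genuinely \emph{zero-tolerant}: even when many coordinates $a_i$ vanish, $T_a$ must retain dense range and still admit the forward maps $S_n$ of the criterion. I expect this to be the crux, resolved by the self-similar overlap that lets the lowest surviving coordinate regenerate the full shift dynamics on a dense invariant subspace; the remaining points (boundedness via summable weights, closedness of $M$, and the exact norm identity on $\ell_1$) should then be routine.
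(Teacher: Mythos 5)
Your high-level strategy coincides with the paper's: apply the Supercyclicity Criterion to an $\ell_1$-indexed linear family of shift-type operators in which the smallest index of a nonzero coordinate governs the dynamics (the paper realizes your ``self-similar overlapping family'' concretely as the powers of a single weighted backward shift, $T_\lambda=\sum_k\lambda_k B_{\bf w}^k$ as in \eqref{eq.1}). But the proposal has two genuine gaps. The decisive one concerns closedness. For a general separable $X$ you obtain a \emph{closed} subspace only through the asserted lower bound $\|T_a\|\geq c\|a\|$, which you never prove, and which your disjoint-support mechanism cannot deliver outside $\ell_1$: for pairwise disjointly supported unit vectors one has $\bigl\|\sum_i a_i u_i\bigr\|=\max_i|a_i|$ in $c_0$ and $\bigl(\sum_i|a_i|^p\bigr)^{1/p}$ in $\ell_p$, not $\sum_i|a_i|$. (A uniformly bounded family, bounded below in $\ell_1$-norm, all of whose nonzero combinations are supercyclic would in particular answer, for every separable $X$, the question the paper itself leaves open at the end of Section \ref{main-thm}.) Without the lower bound you must pass to the closure of $\{T_a : a\in\ell_1\}$, and there your key simplification evaporates: a norm-limit $U$ of shift-type operators need not annihilate finitely supported vectors, so condition \eqref{cond1-super} is no longer ``automatic.'' Handling exactly this is the paper's main technical content: for $U\neq 0$ in the closure it finds $k_0$ with $\inf_m|\lambda_{k_0}^m|>0$ (Lemma \ref{lem0}), builds right inverses whose growth is controlled by an increasing function $G_{k_0,\delta}$ independent of $m$ (Lemmas \ref{lem00} and \ref{lem1}), and then chooses a diagonal sequence $(m_k)$ with $\|R_{m_k}^k-U^k\|_{\mathcal{L}(X)}\leq 2^{-k}[G_{k_0,\delta}(k^2)]^{-k}$, so that both \eqref{cond1-super} and \eqref{cond2-super} survive the passage to the limit (Lemma \ref{lem2}). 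Nothing in your proposal plays this role.

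The second gap is that the construction you rely on is named, not given. You yourself flag as ``the crux'' that $T_a$ must remain shift-like, with right inverses defined on a dense invariant subspace, even when arbitrarily many coordinates of $a$ vanish; ``self-similar overlap'' is a label for the desired phenomenon, not an argument. The paper's implementation of it is where the work lies: for $T_\lambda$ the equation $(T_\lambda\circ S_{\lambda,d})(y)=y$ becomes an upper-triangular linear system whose diagonal entries involve only the first nonzero coordinate $\lambda_{\mathfrak{p}_\lambda}$, and the quantitative bound $\|S_{\lambda,d}(y)\|_X\leq F_\lambda(d)\,\|y\|_X$ needed later comes from determinant estimates for the inverse matrix (Lemma \ref{lem00}). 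Relatedly, your $\ell_1$-isometry design (a single vector $z$ with $T_iz=u_i$ pairwise disjointly supported, and $T_i$ ``supported away from $z$'' otherwise) pulls against supercyclicity: the disjointness you need for the norm identity at $z$ works against the overlap you need so that combinations with many vanishing coordinates still have dense projective orbits, and you exhibit no family achieving both. In the paper the isometry requires no extra design: $T_\lambda(e_{n+1})=(\lambda_n,\ldots,\lambda_1,0,\ldots)$, and the supremum over $n$ already gives $\|T_\lambda\|_{\mathcal{L}(\ell_1)}\geq\|\lambda\|_1$. In short, your plan points in the right direction, but the two steps you defer as routine are precisely the substance of the proof.
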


It is worth noting that D. Kitson and R. M. Timoney provided a general condition in \cite[Theorem 3.3]{KitTim11} for proving the spaceability of a set $A$ in a Fréchet space $Z$. To the best of our knowledge, this is the only known general condition in Fréchet space for establishing spaceability. Unfortunately, the standard method of applying this condition necessitates the complement, denoted as $Z \setminus A$, of the set to be a vector space in its own right. Regrettably, this requirement is not met in our specific scenario. To illustrate this point, let us examine the following linear continuous operators:
\small
 $$(\lambda_k) \in \ell_1 \mapsto (0,\lambda_3,\lambda_4,\lambda_5, \ldots) \in \ell_1 \ \ \mbox{and} \ \ (\lambda_k) \in \ell_1 \mapsto (\lambda_2, 0, 0, 0, \ldots) \in \ell_1.$$ \normalsize
Neither of these operators is supercyclic on $\ell_1$, which is evident. However, their sum yields the backward shift operator, a well-known example of a supercyclic operator (cf. \cite[Example 1.15]{BayMath-book} for more details). Consequently, we pursue a more constructive approach to prove Theorem \ref{thm1}.

The remaining sections of the article is structured as follows. In Section \ref{back-notations}, we present the necessary notations used throughout the article, along with some background information relevant to the topic. Section \ref{main-thm} contains the proof of the main theorem, which is presented in a concise manner.  The proof relies on the use of Lemma \ref{lem2}, which was proved by employing the Supercyclicity Criterion, as discussed in Section \ref{Sec-lemmas}. Section \ref{Sec-lemmas} is dedicated to the technical aspects of the proof, where we provide four technical lemmas leading up to Lemma \ref{lem2}, which is directly utilized in the proof of the main result.

\section{Background and notations} \label{back-notations}

Throughout, $X$ denotes an infinite-dimensional separable Banach space, which can be either real or complex. The symbol $\mathcal{L}(X)$ stands for the set of all continuous linear operators from the normed space $X$ to itself. The notation $U^k$ signifies the composition of the map $U$ with itself $k$ times, where $U$ is a given map.

As a consequence of \cite[Theorem 1]{Ovsepian-Pelcz}, we can and will fix a biorthogonal system $(x_n, x_n^*)_{n=1}^\infty \subset X \times X^*$ with the following properties:
	\begin{align}
	&\sup_{n \in \mathbb N}\|x^*_n\| =: C_X < \infty \mbox{ and }\|x_n\| = 1 \mbox{ for each } n; \label{eq-2.1}\\ &x_m^*(x_n) = \delta_{mn}, \mbox{ where } \delta_{mn} \mbox{ denotes the Kronecker delta}; \label{eq-2.2}\\ &\text{span}\{x_n : n \in \mathbb N\}  \mbox{ is dense in } X; \label{eq-2.3}\\ &x = 0 \mbox{ if } x_n^*(x) = 0 \mbox{ for each } n. \label{eq-2.4}
\end{align}

For what will follow, it is worth noting that from $\eqref{eq-2.1}$ and $\eqref{eq-2.2}$, we have $C_X \geq 1$ (and thus $C_X^{-1} \leq 1$).

For each sequence ${\bf w} = (w_n) \in \ell_1$ of positive numbers, we can define a weighted backward shift operator $B_{\bf w} : X \to X$ as follows:
\begin{align*}
	\begin{split}
	B_{\bf w}(x) := \sum_{n=1}^\infty x^*_{n+1}(x) \, w_n \, x_n.
	\end{split}
\end{align*}
We have that $B_{\bf w}$ is well-defined, linear, and continuous. Moreover, ${\|B_{\bf w}\|}_{\mathcal{L}(X)} \leq 1$ whenever ${\|{\bf w}\|}_1 \leq C_X^{-1}$. Indeed, we can prove the well-definedness  of $B_{\bf w}$ through the following calculation:
\begin{align} \label{conta-1}
	\begin{split}
		\sum_{n=1}^\infty {\|x^*_{n+1}(x) \, w_n \, x_n\|}_X \stackrel{\eqref{eq-2.1}}{\leq} \sup_{i \geq 2}|x^*_i(x)| \ \sum_{n=1}^\infty w_n \stackrel{\eqref{eq-2.1}}{\leq} C_X\, {\|x\|}_X \, {\|{\bf w}\|}_1.
	\end{split}
\end{align}
The linearity of $B_{\bf w}$ is evident, while $(\ref{conta-1})$ shows that its operator norm is bounded by $1$ when ${\|{\bf w }\|}_1 \leq C_X^{-1}$. Throughout this article, we make the overarching assumption that
\begin{align} \label{sequence-w}
{\bf w} = (w_n) \in \ell_1, \ \ w_n \searrow  0 \ \ \mbox{and} \ \ {\|{\bf w}\|}_1 \leq C_X^{-1} \leq 1.
\end{align}
Note that the second assumption explicitly designates $(w_n)$ as a decreasing sequence of positive real numbers.

\vspace{0.1in}

We also use the following notation throughout Section \ref{Sec-lemmas}. For a vector $x \in X$, its support is denoted by $supp(x) = \{n : x^*_n(x) \neq 0\}$. Moreover, for any pair of natural numbers $M \leq N$, we set $X_{[M,N]} := span\{x_n : M \leq n \leq N\}$. To simplify notation, we set $X_N := X_{[1,N]}$ for every $N \in \mathbb N := \{1,2,\ldots\}$. Finally, we define $X_\infty$ as the union of all $X_{[M,N]}$; that is, $X_\infty$ is the vector subspace of $X$ generated by $(x_n)$, and it follows from \eqref{eq-2.3} that $X_\infty$ is dense in $X$.

For any $\lambda = (\lambda_k)_{k=1}^\infty \in \ell_1 \setminus \{0\}$, we define $\mathfrak{p}_\lambda$ as the smallest element in $supp(\lambda) := \{k : \lambda_k \neq 0\}$. Also, for any $y \in X_\infty \setminus \{0\}$, $\mathfrak{q}_y$ is defined as the largest element in $supp(y)$, which is well-defined because of \eqref{eq-2.2} and \eqref{eq-2.4}. In addition to that, we consider a function  $F_\lambda : \mathbb N \to \mathbb (0,\infty)$ given by
	\begin{align*}
		F_{\lambda}(d) := \dfrac{C_X  (d+1)!\left(\displaystyle\max_{0\leq i < d} |\lambda_{\mathfrak{p}_\lambda + i}|\right)^{d-1}}{{|\lambda_{\mathfrak{p}_\lambda}|}^d \, w_{\mathfrak{p}_\lambda + d}^{d \cdot \mathfrak{p}_\lambda}}.
	\end{align*}
Note that $F_\lambda$ is an increasing function. Indeed, since $\displaystyle\max_{0 \leq i < d}|\lambda_{\mathfrak{p}_\lambda+i}| \geq |\lambda_{\mathfrak{p}_\lambda}| > 0$ for each $d \in \mathbb N$ and $0 < w_{k+1} < w_k < C_X^{-1} \leq 1$ for each $k \in \mathbb N$, we have
\begin{align*}
	\begin{split}
 \left(\dfrac{\displaystyle\max_{0 \leq i < d} |\lambda_{\mathfrak{p}_\lambda +i}|}{|\lambda_{\mathfrak{p}_\lambda}| \, w_{\mathfrak{p}_\lambda + d}^{\mathfrak{p}_\lambda}}\right)^{d-1} \dfrac{1}{|\lambda_{\mathfrak{p}_\lambda}| \, w_{\mathfrak{p}_\lambda + d}^{\mathfrak{p}_\lambda}} \leq \left(\dfrac{\displaystyle\max_{0 \leq i < d+1} |\lambda_{\mathfrak{p}_\lambda +i}|}{|\lambda_{\mathfrak{p}_\lambda}| \, w_{\mathfrak{p}_\lambda + d + 1}^{\mathfrak{p}_\lambda}}\right)^{d} \dfrac{1}{|\lambda_{\mathfrak{p}_\lambda}| \, w_{\mathfrak{p}_\lambda + d + 1}^{\mathfrak{p}_\lambda}},
\end{split}
\end{align*}
which promptly  implies $F_\lambda(d) \leq F_\lambda(d+1)$.

Now, given a family of sequences $(\lambda_k^m)_{k=1}^\infty \subset \mathbb K$, $m \in \mathbb{N}$, such that $(\lambda_k^m)_{m=1}^\infty$ converges for each $k$ and $\displaystyle\inf_{m \in \mathbb N} |\lambda_{k_0}^m| =: \delta > 0$ for some $k_0 \in \mathbb N$, we define a function $G_{k_0,\delta}: \mathbb{N} \to (0,\infty)$ as follows:
\begin{align} \label{def-G}
	G_{k_0,\delta}(d) := \dfrac{ C_X(d+1)! \left(\displaystyle\sup_{j \in \mathbb N} \max_{0\leq i < d} |\lambda_{k_0 + i}^j|\right)^{d-1}}{\delta^d \, w_{k_0+d}^{d \cdot k_0}}.
\end{align}
We can similarly prove that $G_{k_0,\delta}$ is an increasing function. This is due to the fact that $\displaystyle\sup_{j \in \mathbb N} \max_{0\leq i < d} |\lambda_{k_0 + i}^j| \geq \delta > 0$ for each $d \in \mathbb N$ and $0 < w_{k+1} < w_k < C_X^{-1} \leq 1$ for each $k \in \mathbb N$.

\section{Proof of the main result} \label{main-thm}

In accordance with the notations introduced in Section \ref{back-notations}, we define the operator $T : \ell_1 \to \mathcal{L}(X)$ as follows:
\begin{align} \label{eq.1}
	T_\lambda := T(\lambda) = \sum_{k=1}^\infty \lambda_k \, B_{\bf w}^k \ \ \mbox{with} \ \ \lambda = (\lambda_k)_k \in \ell_1.
\end{align}

For every $x$ in the unit ball of $X$ and each $\lambda = (\lambda_k) \in \ell_1$, we have
$$\sum_{k=1}^\infty {\|\lambda_k \, B_{\bf w}^k(x)\|}_X \leq \sum_{k=1}^\infty |\lambda_k| < \infty,$$
where the first inequality follows from ${\|B_{\bf w}\|}_{\mathcal{L}(X)} \leq 1$ (cf. Section \ref{back-notations}). Thus, $T$ is clearly well-defined, linear, continuous, and ${\|T(\lambda)\|}_{\mathcal{L}(X)} \leq {\|\lambda\|}_1$ for each $\lambda \in \ell_1$. Furthermore, $T$ is injective since $T(\lambda) = 0$ implies 
$$\sum_{n=1}^N \left(\lambda_{N+1-n} \cdot \prod_{i=n}^N w_i \right) x_n = \sum_{k=1}^\infty \lambda_k \, B_{\bf w}^k(x_{N+1}) = 0,$$ 
for all $N \in \mathbb N$, and thus $\lambda = 0$. Finally, by utilizing Lemma \ref{lem2}, we conclude that the closure of the range of $T$ contains only supercyclic operators (up to the null operator). This completes the proof of the first statement of the theorem.

For $X = \ell_1$, we can consider $T$ defined by using $B$, the standard backward shift operator on $\ell_1$, instead of $B_{\bf w}$. In this case, all the arguments mentioned earlier apply seamlessly. In particular, it is worth noting that the validity of Lemma \ref{lem2} follows by setting $w_k = 1$ in any part of Section \ref{Sec-lemmas} and letting $X = \ell_1$ be equipped with its canonical Schauder basis $(e_n)$. This basis gives rise to a biorthogonal system $(e_n, e_n^*) \subset \ell_1 \times \ell_\infty$, satisfying conditions $\eqref{eq-2.1}$-$\eqref{eq-2.4}$.

 Moreover, for this specific case where $X = \ell_1$, we can establish that $T$ is an isometry by demonstrating the reverse inequality as follows:
\small
\begin{eqnarray*}
	\begin{split}
		\|T(\lambda)\|_{\mathcal{L}(\ell_1)} \geq \sup_{n \in \mathbb N} \left\|\sum_{k=1}^\infty \lambda_k B^k(e_{n+1})\right\|_1 = \sup_{n \in \mathbb N} {\|(\lambda_n, \lambda_{n-1}, \ldots, \lambda_1, 0, 0, \ldots)\|}_1 = {\|\lambda\|}_1.
	\end{split}
\end{eqnarray*}\normalsize
This concludes the proof of Theorem \ref{thm1}.

\vspace{0.1in}

In light of Theorem \ref{thm1}, it is natural to raise the following question.

\begin{question}
	Which spaces $X$, other than $\ell_1$, permit the existence of (isometric) copies of classical infinite-dimensional Banach spaces,  excluding the null operator, within the set of supercyclic operators in $\mathcal{L}(X)$?
\end{question}

\section{Technical lemmas} \label{Sec-lemmas}

In this section, we present four technical lemmas, with the last one directly applied in the proof of Theorem \ref{thm1}. For any notation that has not been introduced within the proofs or statements of the lemmas, please refer to Section 2 for its definition and explanation.

\begin{lemma} \label{lem00}
	Let $T$ be defined as in $(\ref{eq.1})$, and let $0 \neq \lambda = (\lambda_k)_{k=1}^\infty \in \ell_1$. Then, there exists a family of linear maps $S_{\lambda,d}: X_d \to X_{[\mathfrak{p}_\lambda + 1, \mathfrak{p}_\lambda+d]}$, $d \in \mathbb N$, satisfying the following properties for each $y \in X_d$:
	\begin{enumerate}[label=$(\mathsf{A1})$]
		\item \label{a1} $(T_\lambda \circ S_{\lambda,d})(y) = y$.
	\end{enumerate}
	\begin{enumerate}[label=$(\mathsf{B1})$]
		\item \label{b1} ${\|S_{\lambda,d}(y)\|}_X \leq F_\lambda(d) \cdot {\|y\|}_X$.
	\end{enumerate}
	\begin{enumerate}[label=$(\mathsf{C1})$]
	\item \label{c1} $\mathfrak{q}_{S_{\lambda,d}(y)} = \mathfrak{p}_\lambda + \mathfrak{q}_y$ if $y \not= 0$.
\end{enumerate}
\end{lemma}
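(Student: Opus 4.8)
The plan is to construct $S_{\lambda,d}$ as the inverse of the restriction of $T_\lambda$ to the block $X_{[\mathfrak{p}_\lambda+1,\mathfrak{p}_\lambda+d]}$, after showing that this restriction is a triangular isomorphism onto $X_d$.

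First I would record the action of $B_{\bf w}$ on the basis: $B_{\bf w}(x_m)=w_{m-1}x_{m-1}$ for $m\ge 2$ and $B_{\bf w}(x_1)=0$, so by iteration $B_{\bf w}^k(x_m)=\big(\prod_{j=1}^{k}w_{m-j}\big)x_{m-k}$ for $k<m$ and $B_{\bf w}^k(x_m)=0$ for $k\ge m$. Since $\lambda_k=0$ for $k<\mathfrak{p}_\lambda$, this gives
\begin{align*}
T_\lambda(x_{\mathfrak{p}_\lambda+\ell})=\sum_{k=\mathfrak{p}_\lambda}^{\mathfrak{p}_\lambda+\ell-1}\lambda_k\Big(\prod_{j=1}^{k}w_{\mathfrak{p}_\lambda+\ell-j}\Big)x_{\mathfrak{p}_\lambda+\ell-k}\in X_\ell\subseteq X_d\qquad(1\le\ell\le d),
\end{align*}
with the coefficient of the top vector $x_\ell$ (coming from $k=\mathfrak{p}_\lambda$) equal to $\lambda_{\mathfrak{p}_\lambda}\prod_{m=\ell}^{\mathfrak{p}_\lambda+\ell-1}w_m\neq 0$. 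Hence $T_\lambda$ maps $X_{[\mathfrak{p}_\lambda+1,\mathfrak{p}_\lambda+d]}$ into $X_d$ and, in the bases $(x_{\mathfrak{p}_\lambda+\ell})_{\ell=1}^{d}$ and $(x_j)_{j=1}^{d}$, is represented by a triangular matrix $A=(A_{j\ell})$ with $A_{j\ell}=0$ for $\ell<j$ and nonvanishing diagonal. It is therefore an isomorphism onto $X_d$, and I define $S_{\lambda,d}:=\big(T_\lambda|_{X_{[\mathfrak{p}_\lambda+1,\mathfrak{p}_\lambda+d]}}\big)^{-1}$, which gives \ref{a1} immediately.

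Property \ref{c1} then follows from the triangular shape of $A^{-1}$: since $S_{\lambda,d}(x_n)=A_{nn}^{-1}x_{\mathfrak{p}_\lambda+n}+(\text{combination of }x_{\mathfrak{p}_\lambda+\ell}\text{ with }\ell<n)$, for $y\neq 0$ with $\mathfrak{q}_y=n$ the coefficient of $x_{\mathfrak{p}_\lambda+n}$ in $S_{\lambda,d}(y)$ equals $x_n^*(y)\,A_{nn}^{-1}\neq 0$ and no lower term reaches index $\mathfrak{p}_\lambda+n$, whence $\mathfrak{q}_{S_{\lambda,d}(y)}=\mathfrak{p}_\lambda+\mathfrak{q}_y$.

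The substantive part is the norm estimate \ref{b1}, which I regard as the main obstacle. Writing $S_{\lambda,d}(y)=\sum_{\ell=1}^{d}c_\ell\,x_{\mathfrak{p}_\lambda+\ell}$, the $c_\ell$ are obtained by back-substitution from $Ac=(x_j^*(y))_{j=1}^d$, and $\|S_{\lambda,d}(y)\|_X\le\sum_\ell|c_\ell|$. Setting $\alpha:=|\lambda_{\mathfrak{p}_\lambda}|\,w_{\mathfrak{p}_\lambda+d}^{\mathfrak{p}_\lambda}$ and $\beta:=\max_{0\le i<d}|\lambda_{\mathfrak{p}_\lambda+i}|$, the monotonicity $w_{k+1}<w_k<1$ yields the uniform bounds $|A_{jj}|\ge\alpha$ and $|A_{j\ell}|\le\beta$, while $|x_j^*(y)|\le C_X\|y\|_X$. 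Plugging these into the recursion $|c_j|\le\alpha^{-1}\big(C_X\|y\|_X+\beta\sum_{\ell>j}|c_\ell|\big)$ and proving by downward induction on $j$ the estimate
\begin{align*}
|c_j|\le C_X\|y\|_X\,(d-j+1)!\,\beta^{\,d-j}\,\alpha^{-(d-j+1)},
\end{align*}
I expect the sum over $j$ to collapse to exactly $F_\lambda(d)\|y\|_X$, using $\alpha^{d}=|\lambda_{\mathfrak{p}_\lambda}|^{d}w_{\mathfrak{p}_\lambda+d}^{\,d\cdot\mathfrak{p}_\lambda}$. Both the inductive step and the final summation reduce to the elementary inequality $\sum_{u=0}^{p}u!\,r^{u}\le(p+1)!\,r^{p}$, valid for every $r\ge 1$; here $r=\beta/\alpha\ge 1$ because $\alpha\le|\lambda_{\mathfrak{p}_\lambda}|\le\beta$. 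I would prove this inequality by a short induction on $p$ (the step amounting to $1+r\le(p+2)r$). The delicate point, and where the computation is most error-prone, is matching the accumulated factorials and the powers of $\alpha$ and $\beta$ against the exact form of $F_\lambda(d)$.
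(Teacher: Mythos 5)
Your construction of $S_{\lambda,d}$ is the same as the paper's: you invert the restriction of $T_\lambda$ to $X_{[\mathfrak{p}_\lambda+1,\mathfrak{p}_\lambda+d]}$, which the paper phrases as solving an upper triangular linear system ${\bf M}b=a$ with entries ${\bf M}[n,k]=\lambda_{\mathfrak{p}_\lambda+k-n}\prod_{i=n}^{\mathfrak{p}_\lambda+k-1}w_i$; your matrix $A$ is exactly this ${\bf M}$, and your proofs of \ref{a1} and \ref{c1} via triangularity and the nonvanishing diagonal coincide with the paper's. Where you genuinely diverge is the estimate \ref{b1}. The paper bounds the individual entries of ${\bf M}^{-1}$ using the cofactor (adjugate) formula together with the Leibniz permutation expansion of the minors, getting $|{\bf M}^{-1}[n,k]|\leq (d-1)!\max_{1\leq i\leq j\leq d}|\lambda_{\mathfrak{p}_\lambda+j-i}|^{d-1}\big/\big(|\lambda_{\mathfrak{p}_\lambda}|^{d}w_{\mathfrak{p}_\lambda+d}^{d\cdot\mathfrak{p}_\lambda}\big)$, and then sums over the entries, using $d(d+1)(d-1)!=(d+1)!$ to land on $F_\lambda(d)$. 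You instead run back-substitution with the uniform bounds $|A_{jj}|\geq\alpha$ and $|A_{j\ell}|\leq\beta$, proving $|c_j|\leq C_X\|y\|_X\,(d-j+1)!\,\beta^{d-j}\alpha^{-(d-j+1)}$ by downward induction. I verified that your scheme closes: the inductive step is precisely your inequality $\sum_{u=0}^{p}u!\,r^u\leq(p+1)!\,r^p$ with $p=d-j$ and $r=\beta/\alpha$; the hypothesis $r\geq 1$ holds because $w_i\leq 1$ forces $\alpha\leq|\lambda_{\mathfrak{p}_\lambda}|\leq\beta$; and the final summation $\sum_{j=1}^{d}|c_j|\leq C_X\|y\|_X\sum_{u=0}^{d-1}(u+1)!\,\beta^{u}\alpha^{-(u+1)}$ is indeed at most $F_\lambda(d)\|y\|_X$, since $\alpha\leq\beta$ and $\sum_{v=1}^{d}v!\leq(d+1)!$ (strictly speaking the sum is bounded by, not equal to, $F_\lambda(d)\|y\|_X$, which is all \ref{b1} requires). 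So your proof is correct. What it buys is a fully elementary, determinant-free argument for the key estimate — no cofactor formula, no permutation expansion — at the price of somewhat more delicate factorial bookkeeping; the paper's determinant route delivers the entrywise bound on ${\bf M}^{-1}$ in one stroke and makes the appearance of the factor $(d+1)!$ in $F_\lambda(d)$ immediate.
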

\begin{proof}
	To define $S_{\lambda,d}$, we will use the following notation:
	$$S_{\lambda,d}(y) :=: \sum_{n=\mathfrak{p}_\lambda+1}^{\mathfrak{p}_\lambda+d} b_n^y \, x_n \ \  \mbox{with}  \ \ y = \sum_{n=1}^d a_n \, x_n \in X_d.$$
	Next, the goal is to establish, for each $y \in X_d$, the unique existence of $b_n^y \in \mathbb K$, $n=\mathfrak{p}_\lambda+1,\mathfrak{p}_\lambda+2,\ldots,\mathfrak{p}_\lambda+d$, for which  $(\mathsf{A1})$ holds. By setting $b_n^y$ to be $0$ if $n < \mathfrak{p}_\lambda +1 $ or $n > \mathfrak{p}_\lambda + d$, we can express the left side of \ref{a1} as follows:
	\small
	\begin{align}
		\begin{split} \label{leftsidesystem}
			(T_\lambda \circ S_{\lambda,d})(y) &= \sum_{k=1}^\infty \lambda_k \, \sum_{m=1}^\infty b_m^y \,B_{\bf w}^k(x_m) = \sum_{k=1}^\infty \sum_{m=k+1}^\infty \lambda_k \, b_m^y \left(\prod_{i=m-k}^{m-1} w_i \right) x_{m-k} \\ &\stackrel{(n := m-k)}{=} \sum_{n=1 }^\infty \left(\sum_{k = 1}^\infty \lambda_k \, b_{k+n}^y \, \prod_{i = n}^{n+k-1} w_i \right) x_n \\ &= \sum_{n=1}^d \left(\sum_{k = \mathfrak{p}_\lambda}^{\mathfrak{p}_\lambda+d-n} \lambda_k \, b_{k+n}^y \, \prod_{i = n}^{n+k-1} w_i \right) x_n,
		\end{split}
	\end{align}\normalsize
	where the last equality follows from both $b_{k+n}^y = 0$ if $k+n > \mathfrak{p}_\lambda + d$ and $\lambda_k = 0$ if $k < \mathfrak{p}_\lambda$.
	
Hence, by examining \eqref{leftsidesystem}, we can deduce that proving the unique existence of the values $b_n^y \in \mathbb K$, $n=\mathfrak{p}_\lambda+1,\mathfrak{p}_\lambda+2,\ldots,\mathfrak{p}_\lambda+d$, satisfying \ref{a1} is equivalent to demonstrating the solvability of the linear system represented by
	\begin{eqnarray} \label{system1}
		\begin{bmatrix}
			{\bf M}[1,1] &  {\bf M}[1,2] & {\bf M}[1,3] & \cdots & {\bf M}[1,d] \\
			0 & {\bf M}[2,2] & {\bf M}[2,3] & \cdots & {\bf M}[2,d] \\
			0 & 0 & {\bf M}[3,3] & \cdots & {\bf M}[3,d]\\
			\vdots & \vdots & \vdots & \ddots & \vdots \\
			0 & 0 & 0 & \cdots & {\bf M}[d,d]
		\end{bmatrix}
		\begin{bmatrix}
			b_{\mathfrak{p}_\lambda+1}^y \\
			b_{\mathfrak{p}_\lambda+2}^y \\
			b_{\mathfrak{p}_\lambda+3}^y \\
			\vdots \\
			b_{\mathfrak{p}_\lambda+d}^y
		\end{bmatrix}
		=
		\begin{bmatrix}
			a_1 \\
			a_2 \\
			a_3 \\
			\vdots \\
			a_d
		\end{bmatrix},
	\end{eqnarray}
where
\begin{align} \label{coef-M[n,k]}
{\bf M}[n,k] := \lambda_{\mathfrak{p}_\lambda+k-n} \prod_{i=n}^{\mathfrak{p}_\lambda+k-1} w_i, \mbox{ with } n \leq k \mbox{ and } n,k=1, 2, \ldots, d.
\end{align}

Denoting by ${\bf M}$ the matrix of coefficients in $(\ref{system1})$, we can see that the determinant of ${\bf M}$ is $\prod_{i=1}^d {\bf M}[i,i] \not= 0$, implying that the above linear system has a unique solution. Thus, we can define $S_{\lambda,d}$ as follows:
$$S_{\lambda,d}\left(\sum_{n=1}^d a_n \, x_n\right) = \sum_{n=\mathfrak{p}_\lambda+1}^{\mathfrak{p}_\lambda+d} b_{n}^y \, x_{n},$$
where $(b_{\mathfrak{p}_\lambda+1}^y, b_{\mathfrak{p}_\lambda+2}^y, \ldots, b_{\mathfrak{p}_\lambda+d}^y) \in \mathbb{K}^d$ is the unique solution for the system (\ref{system1}). We note that ${\bf M}^{-1}$ is the matrix representation of $S_{\lambda,d}$ with respect to the bases $\{x_1,x_2,\ldots,x_d\}$ and $\{x_{\mathfrak{p}_\lambda + 1},x_{\mathfrak{p}_\lambda + 2},\ldots, x_{\mathfrak{p}_\lambda + d}\}$.

To verify the validity of condition \ref{b1}, we recall (cf. \cite[Ch. VI, Sect. 8]{Lang87-book}) that the coefficients of the inverse matrix of ${\bf M}$, denoted as ${({\bf M}^{-1})}[n,k]$, are given by
\begin{align} \label{coef-inv-M}
{({\bf M}^{-1})}[n,k] = \dfrac{(-1)^{n+k} \, Det(\widehat{\textbf{M}}_{kn})}{Det({\bf M})}.
\end{align}
Here, $\widehat{\textbf{M}}_{kn}$ represents the matrix obtained by eliminating the $k$th row and the $n$th column from ${\bf M}$. Moreover, it is pertinent to recall (cf. \cite[Ch. VI, Sect. 7]{Lang87-book}) that when $d>1$, the determinant of $\widehat{\textbf{M}}_{kn}$ can be computed using the expression
\begin{align} \label{det-of-Mij}
	Det(\widehat{\textbf{M}}_{kn}) = \sum_{\sigma \in \mathscr{P}_{d-1}} \operatorname{sgn}(\sigma) \prod_{\ell=1}^{d-1} \widehat{\textbf{M}}_{kn}[\sigma(\ell),\ell],
\end{align}
where $\mathscr{P}_{d-1}$ stands for the symmetric group of $d-1$ elements and $\operatorname{sgn}(\sigma)$ the signature of the permutation $\sigma$. On the other hand, the determinant of ${\bf M}$ is given by
\begin{align} \label{det-of-M}
Det({\bf M})  = \prod_{j=1}^d {\bf M}[j,j] = \lambda_{\mathfrak{p}_\lambda}^d \, \prod_{j=1}^d  \, \prod_{i=j}^{\mathfrak{p}_\lambda +j-1} w_i.
\end{align}
Thus, we can conclude that
\begin{align} \label{domi-coef-inv-M}
	\begin{split}
	|{\bf M}^{-1}[n,k]| &\stackrel{\eqref{coef-inv-M}+\eqref{det-of-M}}{=} \dfrac{|Det(\widehat{{\bf M}}_{kn})|}{|\lambda_{\mathfrak{p}_\lambda}^d| \prod_{j=1}^d \prod_{i=j}^{\mathfrak{p}_\lambda + j - 1} w_i} \\ &\stackrel{\eqref{det-of-Mij}}{\leq} \dfrac{(d-1)!  \displaystyle\max_{1\leq i \leq j \leq d}|{\bf M}[i,j]|^{d-1}}{|\lambda_{\mathfrak{p}_\lambda}^d| \prod_{j=1}^d \prod_{i=j}^{\mathfrak{p}_\lambda + j - 1} w_i} \\ &\stackrel{\eqref{sequence-w} + \eqref{coef-M[n,k]}}{\leq} \dfrac{(d-1)!  \displaystyle\max_{1\leq i \leq j \leq d} |\lambda_{\mathfrak{p}_\lambda +j-i}|^{d-1}}{|\lambda_{\mathfrak{p}_\lambda}^d| \, w_{\mathfrak{p}_\lambda + d}^{d \cdot \mathfrak{p}_\lambda}}.
\end{split}
\end{align}

Therefore, considering a given $y = \sum_{n=1}^d a_n \, x_n \in X_d$, and utilizing the fact that ${\bf M}^{-1}$ is the matrix representation of $S_{\lambda,d}$ in terms of the bases $\{x_1,x_2,\ldots,x_d\}$ and $\{x_{\mathfrak{p}_\lambda + 1},x_{\mathfrak{p}_\lambda + 2},\ldots, x_{\mathfrak{p}_\lambda + d}\}$, we obtain
\begin{align*}
	{\|S_{\lambda,d}(y)\|}_X &\stackrel{\eqref{eq-2.2}}{=} {\left\|\sum_{n=1}^d\sum_{k=n}^d {\bf M}^{-1}[n,k] \, x_k^*(y) \, x_{\mathfrak{p}_\lambda + n} \right\|}_X \stackrel{\eqref{eq-2.1}}{\leq} C_X \, \sum_{n=1}^d\sum_{k=n}^d |{\bf M}^{-1}[n,k]| \, {\|y\|}_X \\ &\stackrel{\eqref{domi-coef-inv-M}}{\leq} \dfrac{C_X \, d \, (d+1) \, (d-1)!  \displaystyle\max_{1\leq i \leq j \leq d} |\lambda_{\mathfrak{p}_\lambda +j-i}|^{d-1}}{|\lambda_{\mathfrak{p}_\lambda}^d| \, w_{\mathfrak{p}_\lambda + d}^{d \cdot \mathfrak{p}_\lambda}} {\|y\|}_X,
\end{align*}
as stated in \ref{b1}.

To establish \ref{c1}, consider $y = \sum_{n=1}^{d} a_n \, x_n \neq 0$. Let $(a_1,a_2, \ldots, a_{\mathfrak{q}_y}, 0, \ldots, 0) \in \mathbb K^d$. From \eqref{system1}, exploiting the upper triangularity of matrix {\bf M} and the non-zero entries ${\bf M}[i,i]$ for all $i=1,2,\ldots,d$, we deduce that $b_{\mathfrak{p}_\lambda + \mathfrak{q}_y}^y \neq 0$ and $b_n^y = 0$ for $n > \mathfrak{p}_\lambda + \mathfrak{q}_y$. Thus, we have successfully established the validity of \ref{c1}.

\end{proof}

\begin{lemma} \label{lem1}
	Let  $T$ be as $(\ref{eq.1})$ and let $0 \not= \lambda = (\lambda_k)_{k=1}^\infty \in \ell_1$. Then there exists a map $S_\lambda : X_{\infty} \to X$ such that $S_\lambda(X_\infty) \subset X_\infty$ and, for all $k \in \mathbb N$ and $y \in X_\infty$, we have:	
	\begin{enumerate}[label=$(\mathsf{A2})$]
		\item \label{a2} $(T_\lambda^k \circ S_\lambda^k)(y) = y$.
	\end{enumerate}
	\begin{enumerate}[label=$(\mathsf{B2})$]			
		\item \label{b2}$ {\|S_\lambda^k(y)\|}_X \leq \left[F_\lambda(d_k^y)\right]^k \cdot {\|y\|}_X$ if $y \not= 0$, where $d_k^y := (k-1) \, \mathfrak{p}_\lambda + \mathfrak{q}_{y}$.
	\end{enumerate}
\end{lemma}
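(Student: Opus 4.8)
The plan is to assemble the single map $S_\lambda$ by gluing together the finite-rank maps $S_{\lambda,d}$ produced in Lemma \ref{lem00}. The first task is to verify that these maps are mutually consistent: for $y \in X_\infty \setminus \{0\}$ with $q := \mathfrak{q}_y$, I claim that $S_{\lambda,d}(y)$ is independent of $d$ as long as $d \geq q$. This follows from the upper-triangular structure of the system \eqref{system1} already exploited in the proof of \ref{c1}: if $y = \sum_{n=1}^d a_n x_n$ with $a_n = 0$ for $n > q$, then back-substitution forces $b_{\mathfrak{p}_\lambda + n}^y = 0$ for $n > q$, while the remaining top-left $q \times q$ block of $\mathbf{M}$ coincides verbatim (by \eqref{coef-M[n,k]}) with the coefficient matrix defining $S_{\lambda,q}(y)$. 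Hence $S_{\lambda,d}(y) = S_{\lambda,q}(y)$, and I may unambiguously set $S_\lambda(y) := S_{\lambda,d}(y)$ for any $d \geq \mathfrak{q}_y$, with $S_\lambda(0):=0$. Choosing a common $d$ dominating the supports of $y$, $y'$, and $y+y'$ shows that $S_\lambda$ inherits linearity from the $S_{\lambda,d}$, while the inclusion $S_{\lambda,d}(X_d) \subset X_{[\mathfrak{p}_\lambda+1,\mathfrak{p}_\lambda+d]}$ gives $S_\lambda(X_\infty) \subset X_\infty$.

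With $S_\lambda$ in hand, I would first record the base case $T_\lambda \circ S_\lambda = \mathrm{id}$ on $X_\infty$, which is immediate from \ref{a1}. Property \ref{a2} then follows by a short induction on $k$: since $S_\lambda(X_\infty) \subset X_\infty$, the vector $S_\lambda^{k-1}(y)$ lies in $X_\infty$, so one application of $T_\lambda \circ S_\lambda$ acts as the identity on it and reduces $(T_\lambda^k \circ S_\lambda^k)(y)$ to $(T_\lambda^{k-1} \circ S_\lambda^{k-1})(y)$.

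The heart of the argument is \ref{b2}, which I would obtain by iterating the norm bound \ref{b1} while tracking supports via \ref{c1}. Writing $y_j := S_\lambda^j(y)$, property \ref{c1} gives $\mathfrak{q}_{y_j} = \mathfrak{p}_\lambda + \mathfrak{q}_{y_{j-1}}$, hence $\mathfrak{q}_{y_{j}} = j\,\mathfrak{p}_\lambda + \mathfrak{q}_y$ by induction. Since $y_{j-1} \in X_{\mathfrak{q}_{y_{j-1}}}$, applying \ref{b1} at dimension $\mathfrak{q}_{y_{j-1}} = (j-1)\mathfrak{p}_\lambda + \mathfrak{q}_y$ yields
\begin{align*}
\|y_j\|_X \leq F_\lambda\big((j-1)\mathfrak{p}_\lambda + \mathfrak{q}_y\big)\,\|y_{j-1}\|_X .
\end{align*}
Multiplying these inequalities for $j = 1, \ldots, k$ gives $\|S_\lambda^k(y)\|_X \leq \prod_{j=1}^{k} F_\lambda\big((j-1)\mathfrak{p}_\lambda + \mathfrak{q}_y\big)\,\|y\|_X$. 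Finally, each factor is at most $F_\lambda\big((k-1)\mathfrak{p}_\lambda + \mathfrak{q}_y\big) = F_\lambda(d_k^y)$, because $F_\lambda$ is increasing and the arguments grow with $j$; this collapses the product to $[F_\lambda(d_k^y)]^k$ and establishes \ref{b2}.

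I expect the consistency step to be the main conceptual obstacle: the object $S_\lambda$ must be a single, well-defined (and linear) map on all of $X_\infty$, not merely an unrelated family of partial inverses, and it is precisely the triangular back-substitution underlying \ref{c1} that guarantees the restrictions agree on overlaps. Once consistency and the support-tracking identity $\mathfrak{q}_{y_j} = j\,\mathfrak{p}_\lambda + \mathfrak{q}_y$ are secured, both \ref{a2} and \ref{b2} reduce to routine inductions.
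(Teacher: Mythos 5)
Your proposal is correct and follows essentially the same route as the paper: build $S_\lambda$ from the finite-dimensional right inverses $S_{\lambda,d}$ of Lemma \ref{lem00}, prove \ref{a2} by induction from \ref{a1} using $S_\lambda(X_\infty)\subset X_\infty$, and obtain \ref{b2} by iterating \ref{b1} while tracking supports via \ref{c1} (your identity $\mathfrak{q}_{y_j}=j\,\mathfrak{p}_\lambda+\mathfrak{q}_y$ is exactly the paper's composition formula). The only difference is your consistency-and-linearity step, which the paper sidesteps by simply defining $S_\lambda(y):=S_{\lambda,\mathfrak{q}_y}(y)$; your claim is correct but not needed, since the lemma only asks for a map.
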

\begin{proof}	
	Choose a family of linear maps $S_{\lambda,d} : X_d \to X_{[\mathfrak{p}_\lambda+1, \mathfrak{p}_\lambda+d]}$, $d \in \mathbb N$, as in Lemma \ref{lem00}. Let us now define $S_\lambda : X_\infty \to X$ as follows. For $y=0$, we set $S_\lambda(0) = 0$. For $0 \neq y \in X_\infty$, we define 	 
	\begin{align} \label{def.of.Sz}
		S_\lambda(y) := S_{\lambda,\mathfrak{q}_y}(y).
	\end{align}
It can be easily verified that $S_\lambda$ is well-defined and $S_\lambda(X_\infty) \subset X_\infty$. Furthermore, for every $y \in X_\infty \setminus \{0\}$ and $k \in \mathbb N$, we can deduce from \eqref{def.of.Sz} and Lemma \ref{lem00}\ref{c1} that
	\begin{align} \label{composta-Sk}
		S_\lambda^k(y) = (S_{\lambda,d_k^y} \circ S_{\lambda,d_{k-1}^y} \circ \cdots \circ  S_{\lambda,d_{2}^y} \circ S_{\lambda,d_1^y})(y),
	\end{align}
where $d_{\ell}^y = (\ell-1) \, \mathfrak{p}_\lambda + \mathfrak{q}_{y}$, $\ell = 1,2, \ldots, k$. Note that $d_\ell^y \leq d_{\ell+1}^y$ for each $\ell$.

It follows from \eqref{composta-Sk} and Lemma \ref{lem00}\ref{b1} that
\begin{align*}
	{\|S_\lambda^k(y)\|}_X &\leq \prod_{\ell=1}^k F_\lambda(d_\ell^y) \cdot {\|y\|}_X \leq \left[ F_\lambda(d_k^y)\right]^k \cdot {\|y\|}_X,
\end{align*}
where the last inequality holds because $F_\lambda$ is an increasing function. This proves \ref{b2}.

Next, we establish 
	\begin{align} \label{eq.4}
		(T_\lambda \circ S_\lambda)(y) = y \ \ \mbox{for all} \ \ y \in X_\infty.
	\end{align}
If $y = 0$, both sides of the equation are equal to $0$. For $0 \neq y  \in X_\infty$, utilizing Lemma \ref{lem00}\ref{a1}, we have
	$$(T_\lambda \circ S_{\lambda,\mathfrak{q}_y} )(y) = y,$$
and the statement follows from $(\ref{def.of.Sz})$.

Finally, employing $(\ref{eq.4})$, we can conclude by induction that $$(T_\lambda^k \circ S_\lambda^k)(y) = y$$ for all $y \in X_\infty$ and $k \in \mathbb{N}$. This completes the proof of \ref{a2}.
\end{proof}

\begin{lemma} \label{lem0}
	Suppose $T$ as $(\ref{eq.1})$, $(\lambda^m)_{m=1}^\infty \subset \ell_1$ and $U \in \mathcal{L}(X) \setminus \{0\}$ such that
	\begin{align} \label{hip-lem0}
		\|T_{\lambda^m} - U\|_{\mathcal{L}(X)} \xrightarrow{m \to \infty} 0.
	\end{align} Then, there exists $k_0 \in \mathbb N$ such that, by denoting $\lambda^m =: (\lambda_k^m)_{k=1}^\infty$, we have:
	\begin{enumerate}[label=$(\mathsf{A3})$]
		\item \label{a3} $(\lambda_{k}^m)_{m=1}^\infty$ converges for each $k \in \mathbb N$.
	\end{enumerate}
	\begin{enumerate}[label=$(\mathsf{B3})$]
	\item \label{b3} $\lim\limits_{m \to \infty} |\lambda_{k_0}^m| > 0 = \lim\limits_{m \to \infty} \lambda_k^m$ if $k < k_0$.
\end{enumerate}
	\begin{enumerate}[label=$(\mathsf{C3})$]
		\item \label{c3}$ \displaystyle\left\|\sum_{k=k_0}^\infty \lambda_k^{m} \, B_{\bf w}^k - U\right\|_{\mathcal{L}(X)} \xrightarrow{m \to \infty} 0$.
	\end{enumerate}
\end{lemma}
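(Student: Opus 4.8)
The plan is to recover each coefficient $\lambda_k^m$ from the operator $T_{\lambda^m}$ by evaluating a fixed continuous linear functional, so that norm convergence in $\mathcal{L}(X)$ automatically descends to coordinatewise convergence of the sequences $(\lambda_k^m)_m$. The key computation is already implicit in the injectivity argument of Section~\ref{main-thm}: since $B_{\bf w}(x_1)=0$ and $B_{\bf w}(x_m)=w_{m-1}x_{m-1}$ for $m\geq 2$, iterating gives $B_{\bf w}^k(x_m)=\bigl(\prod_{i=m-k}^{m-1}w_i\bigr)x_{m-k}$ when $m>k$ and $B_{\bf w}^k(x_m)=0$ otherwise, whence
\[
T_\lambda(x_{N+1})=\sum_{n=1}^N\lambda_{N+1-n}\Bigl(\prod_{i=n}^N w_i\Bigr)x_n
\]
for every $\lambda\in\ell_1$ and every $N\in\mathbb{N}$.

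To prove \ref{a3}, I would note that \eqref{hip-lem0} forces $T_{\lambda^m}(x_{N+1})\to U(x_{N+1})$ in $X$ for each fixed $N$. Applying the continuous functional $x_1^*$ isolates the coefficient of $x_1$ in the displayed formula and yields
\[
\lambda_N^m\,\prod_{i=1}^N w_i \;=\; x_1^*\bigl(T_{\lambda^m}(x_{N+1})\bigr)\;\xrightarrow{m\to\infty}\; x_1^*\bigl(U(x_{N+1})\bigr).
\]
Because $\prod_{i=1}^N w_i$ is a fixed nonzero scalar, $(\lambda_N^m)_m$ converges; I would set $\ell_N:=x_1^*(U(x_{N+1}))/\prod_{i=1}^N w_i$ as its limit, which establishes \ref{a3}.

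For \ref{b3}, the task reduces to showing that $(\ell_k)_k$ is not identically zero, after which $k_0:=\min\{k:\ell_k\neq 0\}$ is well-defined and gives $\ell_k=0$ for $k<k_0$ together with $|\ell_{k_0}|>0$. I would argue by contradiction: first, $U(x_1)=\lim_m T_{\lambda^m}(x_1)=0$ since $B_{\bf w}(x_1)=0$; second, for $m\geq 2$ the formula above is a \emph{finite} sum, so passing to the limit term by term gives $U(x_{N+1})=\sum_{n=1}^N\ell_{N+1-n}\bigl(\prod_{i=n}^N w_i\bigr)x_n$. If all $\ell_k$ vanished, then $U$ would kill every $x_m$, and by density \eqref{eq-2.3} and continuity we would get $U=0$, contradicting $U\neq 0$. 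This non-vanishing step is the only substantive point of the proof, and it is precisely a limiting version of the injectivity computation from Section~\ref{main-thm}.

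Finally, for \ref{c3} I would split off the first $k_0-1$ terms, writing
\[
\sum_{k=k_0}^\infty \lambda_k^m\,B_{\bf w}^k - U \;=\; \bigl(T_{\lambda^m}-U\bigr)\;-\;\sum_{k=1}^{k_0-1}\lambda_k^m\,B_{\bf w}^k,
\]
and then apply the triangle inequality together with $\|B_{\bf w}^k\|_{\mathcal{L}(X)}\leq\|B_{\bf w}\|_{\mathcal{L}(X)}^k\leq 1$ to bound the $\mathcal{L}(X)$-norm by $\|T_{\lambda^m}-U\|_{\mathcal{L}(X)}+\sum_{k=1}^{k_0-1}|\lambda_k^m|$. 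The first term tends to $0$ by \eqref{hip-lem0}, while each of the finitely many remaining terms tends to $|\ell_k|=0$ for $k<k_0$ by \ref{a3} and the definition of $k_0$; hence the whole expression vanishes in the limit, proving \ref{c3}. (When $k_0=1$ the finite sum is empty and \ref{c3} is just the hypothesis.)
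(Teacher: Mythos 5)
Your proof is correct, and it follows the same overall strategy as the paper's: recover the coefficients $\lambda_k^m$ by testing $T_{\lambda^m}$ against the biorthogonal system, show that not all coordinate limits can vanish (otherwise $U=0$, a contradiction), and prove \ref{c3} by exactly the same triangle-inequality splitting with $\|B_{\bf w}^k\|_{\mathcal{L}(X)}\leq 1$. The differences are local but real. For \ref{a3}, the paper applies every functional $x_\ell^*$ to every $U(x_{k+1})$ and records the full array of limits \eqref{lambda-limit}; you apply only $x_1^*$ to $U(x_{N+1})$, which already isolates $\lambda_N^m\prod_{i=1}^N w_i$ and suffices. More notably, for \ref{b3} the paper needs the totality property \eqref{eq-2.4}: it shows that all coordinates $x_\ell^*(U(x_{k+1}))$ vanish and then invokes \eqref{eq-2.4} to conclude $U(x_{k+1})=0$. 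You bypass \eqref{eq-2.4} entirely by passing to the limit in the \emph{finite} sum $T_{\lambda^m}(x_{N+1})=\sum_{n=1}^N\lambda_{N+1-n}^m\bigl(\prod_{i=n}^N w_i\bigr)x_n$, which expresses $U(x_{N+1})$ directly as a finite linear combination with coefficients given by the limits $\ell_k$; if all $\ell_k$ vanished, $U$ would annihilate each $x_n$ and hence, by \eqref{eq-2.3} and continuity, all of $X$. So your argument uses one fewer property of the Ovsepian--Pe\l czy\'nski system (only \eqref{eq-2.1}--\eqref{eq-2.3}), at no cost in length; what the paper's version buys is the extra identity $x_\ell^*(U(x_{k+1}))=0$ for $\ell>k$, which, however, is not used again in the rest of the paper.
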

\begin{proof}
	
For notational simplicity, we consider $\lambda_{k-\ell+1}^m \cdot \prod_{i=\ell}^k w_i= 0$ when $\ell > k$ in the subsequent analysis. For any $k, m, \ell  \in \mathbb N$, we have
	\begin{align*}
	\left|\lambda_{k-\ell+1}^m \cdot \prod_{i=\ell}^k w_i -  x_\ell^*(U(x_{k+1}))\right| &\stackrel{\eqref{eq-2.2}}{=}  \, {\left\| x_\ell^*\left(\sum_{n=1}^\infty \lambda_{k-n+1}^m \prod_{i=n}^k w_i \, x_n - U(x_{k+1})\right) \right\|}_X \\ &\stackrel{\eqref{eq-2.1}}{\leq} C_X \, {\left\| \sum_{n=1}^\infty \lambda_{k-n+1}^m \prod_{i=n}^k w_i \, x_n - U(x_{k+1})\right\|}_X  \\ &= C_X \, {\left\|T_{\lambda^m}(x_{k+1}) - U(x_{k+1})\right\|}_X \\ &\leq C_X \, {\|T_{\lambda^m} - U\|}_{\mathcal{L}(X)} \xrightarrow{m \to \infty} 0,
		\end{align*}
	which implies
	\begin{align} \label{lambda-limit}
		\begin{split}
		\lim_{m \to \infty} \lambda_{k-\ell+1}^m &= x_\ell^*(U(x_{k+1})) \, \prod_{i=\ell}^k w_i^{-1} \, \mbox{ if } \, 1 \leq \ell \leq k, \ \ \mbox{and} \\ x_\ell^*(U(x_{k+1})) &= 0 \ \ \mbox{if} \ \ \ell > k.
		\end{split}
	\end{align}
In particular, this proves \ref{a3}.

Suppose, for the sake of contradiction, that there does not exist a natural number $k_0$ satisfying condition \ref{b3}. In other words, by using \ref{a3}, we have
$$\lim_{m \to \infty} |\lambda_k^m| = 0 \ \ \mbox{for each} \ \ k \in \mathbb N.$$
From this, together with \eqref{lambda-limit}, we conclude that $x_\ell^*(U(x_{k+1}))=0$ for all $k,\ell \in \mathbb{N}$. Thus, according to \eqref{eq-2.4}, $U(x_k) = 0$ for each $k \geq 2$. On the other hand, it is evident that $U(x_1) = 0$ due to both \eqref{hip-lem0} and $T_{\lambda^m}(x_1) = 0$ whenever $m \in \mathbb{N}$. Therefore, we have just proven that $U(x_k) = 0$ for each $k \in \mathbb{N}$. This leads to a contradiction since $(x_k)_{k=1}^\infty$ is dense in $X$ (cf. \eqref{eq-2.3}) and $U \in \mathcal{L}(X) \setminus \{0\}$. Thus, there is $k_0 \in \mathbb N$ satisfying \ref{b3}.

We can prove \ref{c3} as follows:
	\begin{align*}
		{\left\|\sum_{k=k_0}^\infty \lambda_k^{m} \, B_{\bf w}^k - U \right\|}_{\mathcal{L}(X)} &\leq  {\left\|\sum_{k=1}^\infty \lambda_k^{m} \, B_{\bf w}^k - U\right\|}_{\mathcal{L}(X)} + {\left\|\sum_{k<k_0} \lambda_k^{m} \, B_{\bf w}^k\right\|}_{\mathcal{L}(X)} \\ &\leq \|T_{\lambda^{m}} - U\|_{\mathcal{L}(X)} + \sum_{k < k_0} |\lambda_k^{m}| \xrightarrow{m \to \infty} 0,
	\end{align*}
	where the convergence is due to $(\ref{hip-lem0})$ and \ref{b3}.
\end{proof}

To establish the supercyclicity of an operator $U \in \mathcal{L}(X)$, it is sufficient to demonstrate that $U$ satisfies the Supercyclicity Criterion, as defined by Salas \cite{Mont-rodrSal} and later simplified by Montes-Rodríguez and Salas \cite{Sal99} (refer to \cite[Definition 1.13 and Theorem 1.14]{BayMath-book}). This criterion necessitates the existence of an increasing sequence of integers $(n_k)$, along with two dense sets $\mathcal{D}_1$ and $\mathcal{D}_2$ in the space $X$, and a sequence of maps $S_{n_k}:\mathcal{D}_2 \rightarrow X$ satisfying the following conditions:
\begin{eqnarray}
	&& {\|U^{n_k}(x_0)\|}_X {\|S_{n_k}(y_0)\|}_X \to 0 \, \mbox{ whenever } \, x_0 \in \mathcal{D}_1 \, \mbox{ and } \, y_0 \in \mathcal{D}_2; \label{cond1-super}\\
	&& U^{n_k} S_{n_k}(y_0) \to y_0 \, \mbox{ for all } \, y_0 \in \mathcal{D}_2.\label{cond2-super}
\end{eqnarray}

The Supercyclicity Criterion is employed in the proof of our latest lemma.

\begin{lemma} \label{lem2}
	Assuming $T$ as $(\ref{eq.1})$, we have that each non-null operator in $\overline{T(\ell_1)}^{\mathcal{L}(X)}$ is supercyclic.
\end{lemma}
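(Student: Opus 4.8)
The plan is to verify the Supercyclicity Criterion \eqref{cond1-super}--\eqref{cond2-super} for an arbitrary $U \in \overline{T(\ell_1)}^{\mathcal{L}(X)} \setminus \{0\}$, taking $\mathcal{D}_1 = X_\infty$, taking $\mathcal{D}_2$ to be a countable dense subset of $X_\infty$ (e.g.\ the rational, resp.\ rational-complex, linear combinations of the $x_n$), and using the full sequence $n_k = k$. First I would fix a sequence $(\lambda^m) \subset \ell_1$ with $\|T_{\lambda^m} - U\|_{\mathcal{L}(X)} \to 0$ and invoke Lemma \ref{lem0} to obtain $k_0 \in \mathbb{N}$ and the three properties \ref{a3}--\ref{c3}. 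Writing $\tilde\lambda^m := (0,\dots,0,\lambda^m_{k_0},\lambda^m_{k_0+1},\dots)$, property \ref{c3} says exactly that $V_m := T_{\tilde\lambda^m} = \sum_{k\ge k_0}\lambda^m_k B_{\bf w}^k \to U$ in operator norm; in particular $C_0 := \sup_m\|V_m\|_{\mathcal{L}(X)} < \infty$ and $C_0 \ge \|U\|_{\mathcal{L}(X)}$. By \ref{b3} we have $\lim_m|\lambda^m_{k_0}| > 0$, so after discarding finitely many $m$ I may assume $\inf_m|\lambda^m_{k_0}| =: \delta > 0$, whence $\mathfrak{p}_{\tilde\lambda^m} = k_0$ and $\tilde\lambda^m \neq 0$ for all remaining $m$.

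Condition \eqref{cond1-super} comes essentially for free from the fact that $U$ is locally nilpotent on $X_\infty$. Indeed, each $V_m$ sends $X_{[1,j]}$ into $X_{[1,j-k_0]}$ (a shift down by at least $k_0$), so passing to the norm limit and using that the finite-dimensional subspaces $X_{[1,j-k_0]}$ are closed I get $U(X_{[1,j]}) \subseteq X_{[1,j-k_0]}$, and hence $U^n(X_{[1,j]}) \subseteq X_{[1,j-nk_0]} = \{0\}$ once $nk_0 \ge j$. Thus for every $x_0 \in \mathcal{D}_1 = X_\infty$ one has $U^n(x_0) = 0$ for all large $n$, so \eqref{cond1-super} holds trivially no matter how the factors $\|S_n(y_0)\|_X$ grow.

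For \eqref{cond2-super} I would use the right inverses from Lemma \ref{lem1}. Applying it to each $\tilde\lambda^m$ produces maps $S_{\tilde\lambda^m}: X_\infty \to X_\infty$ with $(V_m^n\circ S_{\tilde\lambda^m}^n)(y) = y$ and $\|S_{\tilde\lambda^m}^n(y)\|_X \le [F_{\tilde\lambda^m}((n-1)k_0 + \mathfrak{q}_y)]^n\|y\|_X$. Since $|\lambda^m_{k_0}|\ge\delta$ and each $\sup_j\max_{0\le i<d}|\lambda^j_{k_0+i}|$ is finite (every $(\lambda^j_{k_0+i})_j$ converges, hence is bounded), a direct comparison with \eqref{def-G} gives $F_{\tilde\lambda^m}(d) \le G_{k_0,\delta}(d)$ for every $d$ and every admissible $m$ — an $m$-independent bound. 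Now I would diagonalize: set $S_n := S_{\tilde\lambda^{m(n)}}^n$, where the indices $m(n)$ are chosen increasing and large enough that $n\,C_0^{n-1}\,\|U - V_{m(n)}\|_{\mathcal{L}(X)}\cdot A_n \le 1/n$, with $A_n := \max_{1\le i\le n}[G_{k_0,\delta}((n-1)k_0 + \mathfrak{q}_{y^{(i)}})]^n\|y^{(i)}\|_X$ ranging over the first $n$ elements of $\mathcal{D}_2$; this is possible precisely because $A_n$ does not depend on $m$ while $\|U - V_m\|_{\mathcal{L}(X)}\to 0$. Then, using the identity $V_{m(n)}^nS_{\tilde\lambda^{m(n)}}^n(y_0) = y_0$ from Lemma \ref{lem1}\ref{a2} together with the telescoping estimate $\|U^n - V_{m(n)}^n\| \le n\,C_0^{n-1}\,\|U - V_{m(n)}\|$, I obtain for each fixed $y_0 = y^{(j)}$ and all $n\ge j$ that
\[
\|U^nS_n(y_0) - y_0\|_X = \|(U^n - V_{m(n)}^n)S_{\tilde\lambda^{m(n)}}^n(y_0)\|_X \le n\,C_0^{n-1}\,\|U - V_{m(n)}\|\,\bigl[G_{k_0,\delta}((n-1)k_0 + \mathfrak{q}_{y_0})\bigr]^n\,\|y_0\|_X \le \tfrac{1}{n},
\]
which tends to $0$, giving \eqref{cond2-super}. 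With both conditions verified, $U$ satisfies the Supercyclicity Criterion and is therefore supercyclic.

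I expect the main obstacle to be this last step: reconciling the super-exponential growth in $n$ of the right-inverse bounds $[G_{k_0,\delta}(\cdots)]^n$ with the convergence $V_m \to U$. The whole construction is engineered so that these bounds are \emph{uniform in $m$} (this is exactly why $G_{k_0,\delta}$ must replace $F_{\tilde\lambda^m}$), which is what renders the diagonal choice of $m(n)$ feasible; getting the bookkeeping of this diagonalization right — simultaneously over a countable dense $\mathcal{D}_2$ — is the delicate point, whereas \eqref{cond1-super} is painless thanks to local nilpotency.
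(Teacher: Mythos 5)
Your proof is correct, and its skeleton is the same as the paper's: verify the Supercyclicity Criterion for $U$ using Lemma \ref{lem0} to extract $k_0$ and $\delta$, Lemma \ref{lem1} applied to the truncated sequences $\widetilde{\lambda}^m$ to get right inverses, the $m$-independent bound $F_{\widetilde{\lambda}^m}(d)\leq G_{k_0,\delta}(d)$, and a diagonal choice of indices $m(n)$. But three of your execution choices genuinely diverge from the paper, and each is a legitimate alternative. First, for condition \eqref{cond1-super} you observe that $U$ is locally nilpotent on $X_\infty$ (since each $V_m$ maps $X_{[1,j]}$ into the closed subspace $X_{[1,j-k_0]}$, so does the norm limit $U$), making the condition hold trivially; the paper instead bounds $\|U^k(x_0)\|_X\leq 2^{-k}[G_{k_0,\delta}(k^2)]^{-k}\|x_0\|_X$ via the approximation and pairs this decay against the growth $[G_{k_0,\delta}(d_k^{y_0})]^k$ of $\|S_k(y_0)\|_X$. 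Your route is simpler and decouples condition \eqref{cond1-super} from the choice of $m(n)$ entirely. Second, you estimate $\|U^n-V_{m(n)}^n\|_{\mathcal{L}(X)}\leq n\,C_0^{n-1}\|U-V_{m(n)}\|_{\mathcal{L}(X)}$ with the non-commutative telescoping identity, whereas the paper first proves $R_m\circ U=U\circ R_m$ (by showing $R_m\circ R_n=R_n\circ R_m$ and passing to the limit) in order to factor $(R_m-U)$ out on the left; your version shows that commutation step is dispensable. Third, and most substantively, the diagonalizations differ: the paper chooses $m_k$ so that $\|R_{m_k}^k-U^k\|_{\mathcal{L}(X)}\leq 2^{-k}[G_{k_0,\delta}(k^2)]^{-k}$, a threshold independent of $y_0$; since $d_k^{y_0}=(k-1)k_0+\mathfrak{q}_{y_0}$ is linear in $k$, eventually $d_k^{y_0}\leq k^2$ and monotonicity of $G_{k_0,\delta}$ makes condition \eqref{cond2-super} hold simultaneously for every $y_0$ in the uncountable set $\mathcal{D}_2=X_\infty$. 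You instead take $\mathcal{D}_2$ countable and tune $m(n)$ to the first $n$ elements of an enumeration. Both are valid (the criterion only needs some dense $\mathcal{D}_2$), but the paper's $k^2$ device buys a cleaner, enumeration-free statement with one choice of indices serving both criterion conditions, while your scheme trades that for more elementary bookkeeping and no reliance on the monotonicity of $G_{k_0,\delta}$.
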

\begin{proof}
	Take $U \in \mathcal{L}(X) \setminus \{0\}$ and $(\lambda^m)_{m=1}^\infty \subset \ell_1$ such that 
	\begin{align*}
		{\|T_{\lambda^m} - U\|}_{\mathcal{L}(X)} \to 0 \ \ \mbox{as} \ \ m \to \infty.
	\end{align*}
We define $\lambda^m = (\lambda^m_k)_{k = 1}^\infty$ for each $m \in \mathbb N$. By applying Lemma \ref{lem0}, we can find a number $k_0 \in \mathbb N$ such that conditions \ref{a3}, \ref{b3}, and \ref{c3} hold. Note that all previously mentioned properties persist when replacing $(T_{\lambda^m})$ with $(T_{\lambda^m})_{m \geq m_0}$ for any arbitrary choice of $m_0 \in \mathbb{N}$. As a result, given that $\lim\limits_{m \to \infty} |\lambda_{k_0}^{m}| > 0$ according to \ref{b3}, it is without loss of generality to suppose that
\begin{align} \label{lem4-eqdelta}
	\inf_{m \in \mathbb N} |\lambda_{k_0}^{m}| =: \delta >0.
\end{align}

By setting $$R_m := \sum_{k=k_0}^\infty \lambda_k^{m} \, B_{\bf w}^k, \ \ \mbox{for each} \ \ m \in \mathbb N,$$
	it follows from \ref{c3} that
	\begin{align} \label{lem4-eq1}
		{\|R_m - U\|}_{\mathcal{L}(X)} \xrightarrow{m \to \infty} 0.
	\end{align}

Note that for $m \not= n$ and $x \in X$, we have
	\begin{align}  \label{lem4-eq2}
		\begin{split}
			(R_m \circ R_n)(x) &= R_m\left(\sum_{k=k_0}^\infty \lambda_k^{n} \, B_{\bf w}^k(x)\right) = \sum_{k=k_0}^\infty \lambda_k^{n} \, R_m(B_{\bf w}^k(x)) \\ &= \sum_{k=k_0}^\infty \lambda_k^{n} \, \sum_{r=k_0}^\infty \lambda_r^{m} \, (B_{\bf w}^r \circ B_{\bf w}^k)(x) = \sum_{k,r = k_0}^\infty \lambda_k^{n} \, \lambda_r^{m} \, B_{\bf w}^{r+k}(x),
		\end{split}
	\end{align}
	where the last equality is obtained because both $\lambda^m$ and $\lambda^n$ are included in $\ell_1$, ensuring the absolute convergence of the series and, consequently, the commutation of the terms.

	From $(\ref{lem4-eq2})$, we can deduce that $R_m \circ R_n = R_n \circ R_m$ for all $n, m \in \mathbb{N}$. Combining this with $(\ref{lem4-eq1})$, we obtain
	$$R_m \circ U = \lim_{n \to \infty} R_m \circ R_n = \lim_{n \to \infty} R_n \circ R_m = U \circ R_m \ \ (\mbox{in } \mathcal{L}(X))$$
	for every $m \in \mathbb{N}$. Hence, $U$ commutes with every operator $R_m$, where $m \in \mathbb{N}$. Consequently, for every $k,m \in \mathbb{N}$, we have
	\begin{align} \label{lem4-eq3}
		\begin{split}
			\|R_m^k - U^k\|_{\mathcal{L}(X)} &= \left\|(R_m - U)\left(\sum_{i=0}^{k-1} R_m^{k-i-1} \, U^{i}\right)\right\|_{\mathcal{L}(X)} \\ &\leq \|R_m - U\|_{\mathcal{L}(X)} \, \sum_{i=0}^{k-1} \|R_m\|^{k-i-1}_{\mathcal{L}(X)} \|U\|^i_{\mathcal{L}(X)} \xrightarrow{m \to \infty} 0.
		\end{split}
	\end{align}

Given $$\widetilde{\lambda}^m = (\widetilde{\lambda}_k^m)_{k=1}^\infty := (0,\ldots,0,\lambda_{k_0}^m, \lambda_{k_0+1}^m, \ldots), \ \ m \in \mathbb N,$$ and utilizing Lemma \ref{lem1}, we deduce that for each $T_{\widetilde{\lambda}^m} = R_m$, there exists a map $\widetilde{S}_m : X_\infty \to X$ with the following properties for any $y \in X_\infty$ and $k \in \mathbb{N}$:
	\begin{align} \label{lem4-eq4}
		\widetilde{S}_m(X_\infty) &\subset X_\infty, \ \ (R_m^k \circ \widetilde{S}_m^k)(y) = y, \ \ \mbox{and}
	\end{align} 
\begin{align} \label{lem4-eq7} 
	\begin{split}
	{\|\widetilde{S}^k_m(y)\|}_X &\leq \left[F_{\widetilde{\lambda}^m}(d_k^y)\right]^k \cdot {\|y\|}_X \stackrel{\eqref{lem4-eqdelta}}{\leq} \left[G_{k_0,\delta}(d_k^y)\right]^k \cdot {\|y\|}_X \mbox{ if } y \not=0,
	\end{split}
\end{align}
where $d_k^y = (k-1) \, k_0 + \mathfrak{q}_{y}$ and $G_{k_0,\delta}$ is defined as in \eqref{def-G} using the family of sequences $(\widetilde{\lambda}_k^m)_{k=1}^\infty, m \in \mathbb N$. Note that the validity of \ref{a3} ensures that $G_{k_0, \delta}$ is well-defined. Additionally, it is noteworthy that the upper bound of ${\|\widetilde{S}^k_m(y)\|}_X$ is independent of $m$ due to the utilization of \eqref{lem4-eqdelta}.

From $(\ref{lem4-eq3})$, we may choose a sequence $m_1 < m_2 < \cdots < m_k < \cdots$ such that
	\begin{align} \label{lem4-eq4half}
		{\|R_{m_k}^k - U^k\|}_{\mathcal{L}(X)} \leq 2^{-k} \, [G_{k_0,\delta}(k^2)]^{-k} \ \ \mbox{for each } \, k \in \mathbb N.
	\end{align}
	For each $k \in \mathbb N$, we define $S_k := \widetilde{S}_{m_k}^k : X_\infty \to X$. The well-defined nature of this mapping arises from the inclusion $\widetilde{S}_{m_k}(X_\infty) \subset X_\infty$ (cf. \eqref{lem4-eq4}). 
	
At this point, it is noteworthy that the definition of the maps $S_k$ solely depends on $U$ (and not on $y$). Specifically, its definition essentially relies on the choices of $k_0$, $\delta$, and $\widetilde{S}_m$, which, in turn, are determined by the selection of the sequence $T_{\lambda^m}$ converging to $U$.

Our aim is to establish that $U$ satisfies the Supercyclicity Criterion (cf. \eqref{cond1-super} and \eqref{cond2-super}), with the selection of $(n_k) = (k)$, $S_k$ as described above, and $\mathcal{D}_1 = \mathcal{D}_2 = X_\infty$.  Let $x_0$ and $y_0$ be two non-null vectors in $X_\infty$. By considering the definition of $R_m$ (which involves infinite sums of compositions of ``weighted backward shift operators''), we can readily observe the existence of $\widetilde{k}_1 \in \mathbb N$ such that ${\|R_{m_k}^k(x_0)\|}_X = 0$ for all $k \geq \widetilde{k}_1$, since $x_0$ has finite support. Consequently, for all $k \geq \widetilde{k}_1$, the following calculation holds:
	\begin{align} \label{lem4-eq5}
		\begin{split}
		{\|U^k(x_0)\|}_X &\leq {\|R_{m_k}^k(x_0) - U^k(x_0)\|}_X + {\|R_{m_k}^k(x_0)\|}_X \\ &\stackrel{\eqref{lem4-eq4half}}{\leq} 2^{-k}[G_{k_0,\delta}(k^2)]^{-k} \, {\|x_0\|}_X.
		\end{split}
	\end{align}
	On the other hand, it follows from $(\ref{lem4-eq7})$ that
	\begin{align} \label{lem4-eq6}
		{\|S_k(y_0)\|}_X = {\|\widetilde{S}_{m_k}^k(y_0)\|}_X \leq [G_{k_0,\delta}(d_k^{y_0})]^k \, {\|y_0\|}_X.
	\end{align}
	Since $G_{k_0,\delta}$ is an increasing function and $d_k^{y_0} \cdot k^{-2} \to 0$ as $k \to \infty$, it is clear that condition (\ref{cond1-super}) in the Supercyclicity Criterion is satisfied, by utilizing $(\ref{lem4-eq5})$ and $(\ref{lem4-eq6})$.

	Furthermore, we can demonstrate the validity of condition (\ref{cond2-super}). This can be established through the subsequent calculation:
	\begin{align*}
		\begin{split}
			{\|U^k \, S_k (y_0) - y_0\|}_{X} &\stackrel{(\ref{lem4-eq4})}{=} {\|U^k \, \widetilde{S}_{m_k}^k(y_0) - R_{m_k}^k \, \widetilde{S}_{m_k}^k (y_0)\|}_{X} \\ &\leq {\|U^k - R_{m_k}^k\|}_{\mathcal{L}(X)} \, {\|\widetilde{S}_{m_k}^k(y_0)\|}_X \\ &\hspace{-0.25in}\stackrel{(\ref{lem4-eq7}) + (\ref{lem4-eq4half})}{\leq} 2^{-k}[G_{k_0,\delta}(k^2)]^{-k} [G_{k_0,\delta}(d_k^{y_0})]^k \, {\|y_0\|}_X \xrightarrow{k \to \infty} 0.
		\end{split}
	\end{align*}
	That completes the proof.
\end{proof}

\bibliographystyle{amsplain}

\end{document}